\newtheorem{thm}{Theorem}
\newtheorem{prop}[thm]{Proposition}
\newtheorem{lem}[thm]{Lemma}
\newtheorem{defn}[thm]{Definition}
\newcommand{\Z}{\mathbf{Z}}
\newcommand{\M}{\mathsf M}
\newcommand{\hh}{\mathsf h}
\newcommand{\HH}{\mathsf H}
\newcommand{\dd}{\mathsf d}
\newcommand{\DD}{\mathsf D}
\newcommand{\Mi}{\mathsf M_{\text{\rm int}}}
\newcommand{\eps}{\varepsilon}
\title{Faster deterministic integer factorization}
\author{Edgar Costa}
\address[Edgar Costa]{Courant Institute of Mathematical Sciences \\
New York University \\
251 Mercer Street \\
New York, N.Y. 10012-1185 \\
U.S.A}
\thanks{The first author was partially supported by FCT doctoral grant SFRH/BD/69914/2010.}
\email{edgarcosta@nyu.edu}
\author{David Harvey}
\address[David Harvey]{School of Mathematics and Statistics \\
University of New South Wales \\
Sydney NSW 2052 \\
Australia}
\thanks{The second author was partially supported by the Australian Research Council, DECRA Grant DE120101293.}
\email{d.harvey@unsw.edu.au}
\begin{document}

\maketitle

\begin{abstract}
The best known unconditional deterministic complexity bound for computing the prime factorization of an integer $N$ is $O(\Mi(N^{1/4} \log N))$, where $\Mi(k)$ denotes the cost of multiplying $k$-bit integers. This result is due to Bostan--Gaudry--Schost, following the Pollard--Strassen approach. We show that this bound can be improved by a factor of $\sqrt{\log \log N}$.
\end{abstract}

\section{Introduction}

In this paper we consider unconditional deterministic complexity bounds for computing the prime factorization of a positive integer $N$. Complexity refers to bit complexity, in the sense of the multitape Turing machine model \cite{Pap-complexity}.

The best known bounds for this problem are all of the shape $O(N^{1/4+\eps})$. The simplest algorithm achieving such a bound is due to Strassen \cite{Str-factoring}. Its complexity is analyzed in \cite{BGS} and shown to be
 \[ O(\Mi(N^{1/4} \log N) \log N), \]
where $\Mi(k)$ denotes the cost of multiplying $k$-bit integers. (The best known bound for $\Mi(k)$ is $\Mi(k) = O(k \log k \, 2^{\log^* k})$ where $\log^* k$ denotes the iterated logarithm \cite{Fur-faster}.) Bostan, Gaudry and Schost \cite{BGS} improved this further to
 \[ O(\Mi(N^{1/4} \log N)). \]
Our main result is the following refinement.
\begin{thm}\label{thm:main}
There exists a deterministic algorithm that computes the prime factorization of a positive integer $N$ in
 \[ O\left(\Mi\left(\frac{N^{1/4} \log N}{\sqrt{\log \log N}}\right)\right) \]
bit operations.
\end{thm}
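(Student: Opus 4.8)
The plan is to follow the Pollard--Strassen method as sharpened by Bostan--Gaudry--Schost, but to shrink the degree of the polynomials involved by means of a \emph{wheel}. To factor $N$ it suffices to find a prime factor $p \le \sqrt N$. Choose $y \approx \tfrac14 \log N$ and trial-divide by the primes $q \le y$, at cost $O(\log N)$; we may then assume $y < p \le \sqrt N$, so that $p$ is coprime to $P := \prod_{q \le y} q$. Letting $R$ be the set of residues in $[1,P]$ coprime to $P$, every such $p$ occurs among the survivors $kP + r$ with $r \in R$, so it suffices to form $Q := \prod (kP+r) \bmod N$ over all survivors $kP + r \le \sqrt N$ and read off $p$ from $\gcd(Q,N)$. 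By Mertens' theorem the number of survivors below $\sqrt N$ is $\asymp \sqrt N / \log y \asymp \sqrt N / \log\log N$, which is where the saving will originate.

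Write $g(x) := \prod_{r \in R} (x+r) \bmod N$, a polynomial of degree $D = \phi(P)$, so that $Q = \prod_{k=0}^{m-1} g(kP)$ with $m = \lceil \sqrt N / P \rceil$. I would pin down $y$ (equivalently $P$) by the balancing condition $D \approx m$, i.e. $P\,\phi(P) \approx \sqrt N$; together with Mertens and $\log P = \theta(y) \sim y$ this forces $y \asymp \log N$ and
\[
 D \;\asymp\; m \;\asymp\; \frac{N^{1/4}}{\sqrt{\log\log N}}.
\]
Granting that $g$ is available, evaluating it at the $m \approx D$ points of the arithmetic progression $0, P, \dots, (m-1)P$ costs $O(\Mi(D \log N))$ by the Bostan--Gaudry--Schost evaluation procedure (breaking the progression into $O(1)$ runs of length $D$ when $m > D$), and this is exactly $O(\Mi(N^{1/4} \log N / \sqrt{\log\log N}))$. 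Forming $Q$ and recovering $p$ from the computed values $g(kP)$ takes only $O(D+m)$ greatest-common-divisor computations, which stays within the bound, and one then recurses on the cofactors, the top-level call dominating as usual.

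The crux is to construct $g$ within the \emph{same} budget $O(\Mi(D\log N))$: a subproduct tree over the $D$ linear factors would cost an extra factor $\log D \asymp \log N$ and ruin the estimate. Instead I would build $g$ prime by prime. Let $q_1 < q_2 < \cdots$ list the primes $\le y$, set $P_j = q_1 \cdots q_j$, and let $g_j(x) = \prod_r (x+r)$ over the residues $r \in [1,P_j]$ coprime to $P_j$. Splitting $[1,P_j]$ into its $q_j$ sub-blocks of length $P_{j-1}$ and sieving out the multiples of $q_j$ yields the identity
\[
 g_j(x) \;=\; \frac{\prod_{t=0}^{q_j - 1} g_{j-1}(x + t P_{j-1})}{\prod_{u} (x + q_j u)},
\]
where $u$ runs over the residues in $[1,P_{j-1}]$ coprime to $P_{j-1}$; the denominator is just a rescaling of $g_{j-1}$, hence free to write down, and both numerator and denominator are monic, so the division is exact and valid over $\Z/N\Z$. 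The numerator is a product of $q_j$ equally spaced shifts of $g_{j-1}$ --- precisely the kind of product over an arithmetic progression of shifts that the Bostan--Gaudry--Schost doubling procedure evaluates in $O(\Mi(q_j \deg g_{j-1} \log N))$, the successive shift-of-values steps having geometrically growing degrees. Hence level $j$ costs $O(\Mi(\phi(P_j) \log N))$; since $\phi(P_j)/\phi(P_{j-1}) = q_j - 1$ climbs to $\approx y$ at the final level, these costs form a geometric-type series dominated by its last term, giving $O(\Mi(D\log N))$ in total.

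Finally one must check that the shift, scaling and interpolation steps --- which invert integers of size $O(D) = O(N^{1/4})$ modulo $N$ --- remain valid. As $N$ could have a prime factor in $(y, N^{1/4}]$, an inversion may fail; but the extended Euclidean algorithm then exhibits a nontrivial factor of $N$, so we may assume every inversion succeeds, exactly as in Bostan--Gaudry--Schost. The single genuinely new ingredient is the prime-by-prime construction of the wheel polynomial $g$: it is what converts the factor-$\log\log N$ reduction in the number of survivors into the claimed factor-$\sqrt{\log\log N}$ reduction in the length of the integer multiplications.
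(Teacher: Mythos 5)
Your overall route is the same as the paper's in spirit (remove small primes, replace the factorial by a wheel-sieved product, gain Mertens' factor $\phi(P)/P \asymp 1/\log\log N$, and exploit Bostan--Gaudry--Schost shifting so that only the geometric mean $\sqrt{Dm} \asymp N^{1/4}/\sqrt{\log\log N}$ enters the multiplication length), but the step you yourself call the crux --- the prime-by-prime construction of $g$ --- has a genuine gap: it mixes two incompatible representations of polynomials. The Bostan--Gaudry--Schost doubling you invoke for the numerator $\prod_{t=0}^{q_j-1} g_{j-1}(x+tP_{j-1})$ operates on \emph{values} along an arithmetic progression and, within the budget $O(\Mi(q_j \deg g_{j-1}\log N))$, it produces only such values, never coefficients. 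Your treatment of the denominator (``a rescaling of $g_{j-1}$, hence free to write down,'' monic, ``the division is exact'') is a statement about \emph{coefficients}. To perform an exact polynomial division you would need the numerator's coefficients, and every known conversion between values on an arithmetic progression and monomial coefficients (interpolation, or in the reverse direction the multipoint evaluation you also need in ``granting that $g$ is available, evaluating it at $0,P,\dots,(m-1)P$'') costs an extra factor of $\log d$; at the top level $d \approx D$ this is a factor $\asymp \log N$, which destroys the $O(\Mi(D\log N))$ budget and with it the theorem. A repair exists but requires an idea you do not supply: stay in the values representation throughout, carrying $g_j$ at the points $0,\beta,\dots,\phi(P_j)\beta$ of one fixed grid, and obtain the denominator's values there by noting they are values of $g_{j-1}$ on the $q_j$-times finer grid $0,\beta/q_j,2\beta/q_j,\dots$, computable by $q_j$ applications of the shift-of-values proposition at total cost $O(q_j\,\M(\phi(P_{j-1}))) = O(\M(\phi(P_j)))$, then divide pointwise (inversion failures revealing factors of $N$ as usual).

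There is a second, independent gap in the parameter tuning: you cannot enforce $D \asymp m$ by choosing $y$, because $P=\prod_{q\le y}q$ jumps by factors $\approx \log N$ (and $P\phi(P)$ by factors $\approx (\log N)^2$) as $y$ crosses a prime. The evaluation cost is governed by $\max(D,m) = \sqrt{Dm}\cdot\sqrt{\max(D,m)/\min(D,m)}$, so an imbalance of $(\log N)^2$ inflates the bound to $O(\Mi(N^{1/4}\log^2 N/\sqrt{\log\log N}))$, worse than Bostan--Gaudry--Schost. This is fixable (e.g.\ multiply the modulus by a power of $2$, which leaves $\phi(P)/P$ unchanged and lets you tune $P\phi(P)$ within a factor of $4$), but as written the balancing claim fails. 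It is instructive to compare with how the paper sidesteps both issues structurally: it keeps the wheel small, $B=\tfrac1{11}\log N$ and $Q=O(N^{1/10})$, so the wheel polynomial's coefficients can be computed \emph{naively} in negligible time $O(\rho^2\,\Mi(\log N))$, and it reaches scale $N^{1/4}$ not by enlarging the wheel but by the doubling construction $H_k(x)=H(x)H(x+1)\cdots H(x+k-1)$ with $k=2^r$, which works with values only, balances degree against number of points automatically at every $r$, and covers ranges $b=4^r\rho Q$ that at worst overshoot $\sqrt N$ by a factor of $4$ --- so no basis conversion and no delicate tuning of $P$ is ever needed.
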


To explain the main idea of our algorithm, we recall Strassen's approach. Consider the simplest situation where $N$ is a product of two distinct primes, say $N = pq$, $p < q$. Let $K = \lfloor N^{1/2}\rfloor$. Since $p \leq K$ and $q > K$ we have $\gcd(K! \bmod N, N) = p$, so it suffices to compute $K! \bmod N$. To simplify further, assume that $K = L^2$ for some integer $L$. Strassen observes that
 \[ K! = f(0) f(L) f(2L) \cdots f((L-1) L) \]
where
 \[ f(x) = (x+1)(x+2) \cdots (x+L). \]
He computes $f(x)$ in $(\Z/N\Z)[x]$ using a product tree, and then evaluates $f(x)$ at $0, L, \ldots, (L-1) L$ using a fast multipoint evaluation algorithm. The overall complexity is quasilinear in $L = O(N^{1/4})$. The algorithm of \cite{BGS} evaluates the same product, but uses a more involved evaluation scheme that saves a factor of $O(\log L)$.

Our key observation is that $K!$ has many terms that do not contribute any useful information. For example, it is easy to extract factors of $2$ from $N$. Once this is done, we may assume $N$ is odd, so any remaining factors must be odd. Thus we should replace $K!$ by a product of the form $1 \times 3 \times 5 \times \cdots \times K'$. This immediately saves a factor of $\sqrt 2$ in Strassen's algorithm (or in the Bostan--Gaudry--Schost algorithm).

More generally, we may select a bound $B$ and remove from $N$ all prime factors bounded by $B$, and then replace the factorial by a generalized factorial that omits all integers divisible by any of these primes. This is a similar idea to the `factorial sieving' performed in \cite{crandall}. Our contribution is to show that the algorithm of \cite{BGS} can be modified to handle such generalized factorials. Choosing a larger $B$ leads to greater savings, but also imposes a cost due to the more complex pattern of integers removed from the generalized factorial. Optimizing the choice of $B$ leads to the bound given in Theorem \ref{thm:main}.

All of the factorization algorithms mentioned above (including ours) are of theoretical interest only, and none of them are remotely practical. If we allow probabilistic algorithms, or complexity arguments that depend on unproved hypotheses such as the Riemann Hypothesis, then much better bounds can be achieved. For this we refer the reader to the excellent survey \cite{CP-primes}.

\section{Fast polynomial evaluation on arithmetic progressions}

In this section $R$ denotes a ring, in which we can multiply and sum elements in $m$ bit operations, and for which polynomials in $R[x]$ of degree $d$ can be multiplied in $\M(d)$ bit operations. We will only provide high-level descriptions of all algorithms and skip the details of their corresponding Turing machine implementations. We assume that $\M(d)$ behaves reasonably, in particular that $\M(d d') \geq d \M(d')$, and so $\M(d) \geq dm$. In the next section we will specialize to the case $R = \Z/N\Z$.

We will often use the following standard result without comment. For a proof see \cite[Lemma 1]{BGS}.
\begin{lem}
Suppose that $r_1,\dots,r_d$ are invertible in $R$. Given
 $r_1,\dots,r_d$ and $(r_1\dots r_d)^{-1}$, we may compute $r_1 ^{-1},
 \dots, r_d ^{-1}$ in $O(d m )$ bit operations.
\end{lem}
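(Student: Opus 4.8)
The plan is to use Montgomery's simultaneous inversion trick, which trades the $d$ separate inversions for the single inversion already supplied plus $O(d)$ multiplications in $R$. First I would form the prefix products $p_i := r_1 r_2 \cdots r_i$ for $1 \le i \le d$, with the convention $p_0 := 1$. Each $p_i$ is obtained from $p_{i-1}$ by one multiplication, so computing and storing the whole list costs $O(dm)$ bit operations.

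The key is the pair of telescoping identities satisfied by the inverses $q_i := p_i^{-1}$. Since $p_{i-1} = p_i r_i^{-1}$, we have $q_{i-1} = r_i q_i$, which lets me shift the inverse of a prefix product one index to the left using a single multiplication and no inversion; and $r_i^{-1} = p_{i-1} q_i$, which recovers the desired individual inverse with one further multiplication. Starting from the given value $q_d = (r_1 \cdots r_d)^{-1}$, I would run a backward loop for $i = d, d-1, \ldots, 1$, at each step outputting $r_i^{-1} = p_{i-1} q_i$ and then setting $q_{i-1} = r_i q_i$ for the next iteration. This pass uses $O(d)$ multiplications, each costing $m$ bit operations, so together with the forward pass the whole procedure runs in $O(dm)$ bit operations, as claimed.

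The argument is elementary and I anticipate no genuine obstacle beyond verifying the two recurrences and the index bookkeeping at the endpoints (using $p_0 = 1$, and noting that $q_0$ is never needed). The conceptual point worth emphasizing is the reason a single inversion suffices: every individual inverse $r_i^{-1}$ is reconstructed purely from the one supplied inverse $(r_1 \cdots r_d)^{-1}$ together with the stored prefix products, so no additional inversion in $R$ is ever performed.
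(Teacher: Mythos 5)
Your proposal is correct and matches the approach the paper relies on: the paper simply cites \cite[Lemma 1]{BGS}, whose proof is exactly Montgomery's simultaneous-inversion trick --- forward prefix products, then a backward pass recovering each $r_i^{-1} = p_{i-1}q_i$ and shifting $q_{i-1} = r_i q_i$, for $O(d)$ multiplications in total. Your recurrences and the $O(dm)$ accounting are all in order, so there is nothing to add.
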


Our basic tool will be \cite[Theorem 5]{BGS}, which is given as Proposition \ref{prop:shifting} below. To state it, we introduce the following notation.
\begin{defn}
Let $\alpha, \beta \in R$ and $d \geq 1$. We say that $\hh(\alpha, \beta, d)$ is satisfied if the elements
$$\beta,\quad 2,\dots,d,\quad (\alpha-d \beta),(\alpha-(d-1) \beta),\dots,(\alpha+d \beta)$$
are invertible in $R$, and we put
$$\dd(\alpha, \beta, d) = \beta 2 \cdots d (\alpha-d \beta)( \alpha-(d-1) \beta)\cdots (\alpha+d \beta).$$
\end{defn}
Thus $\hh(\alpha,\beta,d)$ holds if and only if $\dd(\alpha,\beta,d)$ is invertible.

\begin{prop}\label{prop:shifting}
Let $\alpha,\beta \in R$ and $d \geq 1$. Assume that $\hh(\alpha,\beta,d)$ holds,
and that the inverse of $\dd(\alpha,\beta,d)$ is known. Let $F$ be a polynomial in $R[x]$ of degree at most $d$.
Given
$$ F(0),F(\beta),\dots,F(d\beta),$$
we may compute
$$F(\alpha),F(\alpha+\beta),\dots,F(\alpha+d\beta)$$
in $O( \M(d) )$ bit operations.
\end{prop}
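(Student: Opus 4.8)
The plan is to turn the problem into a single polynomial multiplication via Lagrange interpolation, following the ``shifting of evaluation values'' technique. Since $F$ has degree at most $d$ and we are given its values at the $d+1$ nodes $0,\beta,\dots,d\beta$, the polynomial is determined, and Lagrange's formula reads
$$F(x) = \sum_{i=0}^{d} F(i\beta) \prod_{k \ne i} \frac{x - k\beta}{(i-k)\beta}.$$
I would evaluate this at $x = \alpha + j\beta$ and simplify: the denominator factors as $\prod_{k\ne i}(i-k)\beta = \beta^{d}\, i!\,(-1)^{d-i}(d-i)!$, while the numerator $\prod_{k\ne i}(\alpha+(j-k)\beta)$ equals $P_j/(\alpha+(j-i)\beta)$, where $P_j = \prod_{k=0}^{d}(\alpha+(j-k)\beta)$. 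This yields
$$F(\alpha+j\beta) = \beta^{-d} P_j \sum_{i=0}^{d} \frac{(-1)^{d-i}F(i\beta)}{i!\,(d-i)!}\cdot\frac{1}{\alpha+(j-i)\beta}.$$

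The key observation I would exploit is that the inner sum depends on $i$ and $j$ only through $i$ and $j-i$, so it is a convolution. Writing $c_i = (-1)^{d-i}F(i\beta)/(i!\,(d-i)!)$ and $e_m = (\alpha+m\beta)^{-1}$ for $-d \le m \le d$, the sum equals $\sum_i c_i e_{j-i}$, which is the coefficient of $x^{j+d}$ in the product of $C(x) = \sum_{i=0}^{d} c_i x^i$ and $E(x) = \sum_{m=-d}^{d} e_m x^{m+d}$. Thus the heart of the computation is one multiplication of polynomials of degree $O(d)$, costing $O(\M(d))$; reading off the coefficients for $j = 0,\dots,d$ and multiplying each by $\beta^{-d}P_j$ finishes the evaluation.

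It remains to produce all the scalar data within the target bound, and this is where $\hh(\alpha,\beta,d)$ and the known inverse of $\dd(\alpha,\beta,d)$ do their work. I would note that $\dd(\alpha,\beta,d)$ is \emph{precisely} the product of the list $\beta,\,2,\dots,d,\,(\alpha-d\beta),\dots,(\alpha+d\beta)$, a list of length $O(d)$ comprising exactly the elements I must invert. Feeding this list together with the given $\dd^{-1}$ into the batch-inversion lemma recovers $\beta^{-1}$, the $2^{-1},\dots,d^{-1}$, and all the $e_m$ simultaneously in $O(dm)$ bit operations. From $2^{-1},\dots,d^{-1}$ I build the inverse factorials $(i!)^{-1}$ and $((d-i)!)^{-1}$ by prefix products, from $\beta^{-1}$ the power $\beta^{-d}$, and the $P_j$ from the consecutive products $\alpha+m\beta$ by a sliding window (using the $e_m$ to divide out the trailing factor). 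All of this is $O(dm)$, and since $\M(d) \ge dm$ it is absorbed into the $O(\M(d))$ cost of the single polynomial multiplication.

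The main obstacle I anticipate is not the interpolation identity, which is routine, but the bookkeeping needed to keep every scalar step strictly linear in $d$: in particular, ensuring that no inversion is ever performed in isolation and that all required inverses flow from the one precomputed quantity $\dd^{-1}$. The definition of $\dd$ is evidently engineered so that its factors are exactly the denominators appearing in the simplified Lagrange formula, together with the $\beta$ and the $2,\dots,d$ feeding the factorials; once the convolution structure is recognized, the invertibility hypothesis matches the requirements precisely, and the cost collapses to a single degree-$O(d)$ multiplication.
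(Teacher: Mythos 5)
Your proposal is correct and matches the paper's approach: the paper proves this proposition simply by citing \cite[Theorem 5]{BGS} and noting that the proof is based on the Lagrange interpolation formula, and your argument is precisely that proof written out in full — Lagrange interpolation rearranged into a convolution, computed by one degree-$O(d)$ polynomial multiplication, with all scalar inverses recovered from $\dd(\alpha,\beta,d)^{-1}$ by batch inversion in $O(dm)$ operations. The details (the sign and factorial structure of the denominators, the sliding-window computation of the $P_j$, and the absorption of the $O(dm)$ scalar work into $O(\M(d))$ via $\M(d) \geq dm$) are all handled correctly.
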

\begin{proof}
See \cite[Theorem 5]{BGS}; the proof is based on the Lagrange interpolation formula. We emphasize that the coefficients of $F(x)$ are \emph{not} part of the input.
\end{proof}

Let $H \in R[x]$ be a polynomial of degree $\rho \geq 1$. In Section \ref{application} we will be interested in evaluating the polynomial
 \[ H_k(x) = H(x) H(x+1) \cdots H(x + k - 1) \]
on a certain arithmetic progression. Theorem 8 of \cite{BGS} gives an efficient solution to this problem for $\rho = 1$. The following two results generalize this to the case $\rho \geq 1$.
\begin{prop}
\label{prop:extend}
Let $\beta \in R$ and $k \geq 1$. Assume that  $$ \hh(k, \beta, k \rho ) \quad \text{\rm and} \quad \hh(( k \rho+1)\beta, \beta,k \rho )$$
both hold and that the inverses of 
$$ \dd(k, \beta, k \rho) \quad \text{\rm and} \quad \dd(( k \rho+1)\beta, \beta,k \rho)$$
are known. Given
 $$ H_k(0), H_k(\beta), \ldots, H_k(k\rho\beta), $$
we may compute
 $$ H_{2k}(0), H_{2k}(\beta), \ldots, H_{2k}(2k\rho\beta) $$
in $O(\M(k \rho))$ bit operations.
\end{prop}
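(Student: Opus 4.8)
The plan is to build $H_{2k}$ out of two translated copies of $H_k$. Directly from the definition, the product $H(x)H(x+1)\cdots H(x+2k-1)$ splits into its first $k$ factors and its last $k$ factors, giving
$$ H_{2k}(x) = H_k(x)\,H_k(x+k). $$
Writing $d = k\rho = \deg H_k$, it therefore suffices to produce the values $H_k(j\beta)$ and $H_k(j\beta + k)$ for every $j = 0, 1, \ldots, 2d$, and then to set $H_{2k}(j\beta) = H_k(j\beta)\,H_k(j\beta+k)$ by a single multiplication in $R$ for each $j$.

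First I would record that Proposition \ref{prop:shifting}, although stated for the progression $0, \beta, \ldots, d\beta$, applies verbatim to any progression $c, c+\beta, \ldots, c+d\beta$: applying it to $G(x) = F(x+c)$, which has the same degree $d$ as $F$ and for which the hypothesis $\hh(\alpha,\beta,d)$ and the inverse of $\dd(\alpha,\beta,d)$ are unchanged (neither involves $c$), recovers $F$ on $c+\alpha, c+\alpha+\beta, \ldots, c+\alpha+d\beta$ from its values on $c, c+\beta, \ldots, c+d\beta$ in $O(\M(d))$ bit operations. I would then apply this three times to $F = H_k$. Shifting the input values $H_k(0), \ldots, H_k(d\beta)$ by $\alpha = (d+1)\beta$ yields $H_k((d+1)\beta), \ldots, H_k((2d+1)\beta)$, so that together with the input we know $H_k(j\beta)$ for all $j = 0, \ldots, 2d+1$. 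Shifting the same input by $\alpha = k$ yields $H_k(k), \ldots, H_k(k+d\beta)$; shifting that progression in turn by $\alpha = (d+1)\beta$ yields $H_k(k+(d+1)\beta), \ldots, H_k(k+(2d+1)\beta)$, so that we know $H_k(j\beta + k)$ for all $j = 0, \ldots, 2d+1$. In particular both families of values are available over the full range $j = 0, \ldots, 2d$.

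Crucially, only the two shift amounts $\alpha = k$ and $\alpha = (d+1)\beta = (k\rho+1)\beta$ appear, so the hypotheses $\hh(k,\beta,k\rho)$ and $\hh((k\rho+1)\beta,\beta,k\rho)$, together with the supplied inverses of $\dd(k,\beta,k\rho)$ and $\dd((k\rho+1)\beta,\beta,k\rho)$, are precisely what the three invocations require. For the cost, each of the three applications of Proposition \ref{prop:shifting} runs in $O(\M(k\rho))$, and the final $2d+1 = O(k\rho)$ multiplications in $R$ cost $O(k\rho \cdot m)$, which is $O(\M(k\rho))$ because $\M(d) \geq dm$. Summing gives the claimed bound $O(\M(k\rho))$.

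The step I expect to demand the most care is checking that only the two given invertibility conditions are needed. The mechanism is that the data governing a shift by $\alpha$ depend on $\alpha, \beta, d$ but not on the base point of the progression, which is what lets me reuse the amount $\alpha = (d+1)\beta$ on the translated progression $k, k+\beta, \ldots$ without introducing a third condition such as $\hh(k+(d+1)\beta,\beta,d)$. Verifying that the two uses of each shift amount genuinely cover the full index range $j = 0, \ldots, 2d$ for both $H_k(j\beta)$ and $H_k(j\beta + k)$ is the principal bookkeeping that remains.
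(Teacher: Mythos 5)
Your proof is correct and follows essentially the same route as the paper: the decomposition $H_{2k}(x) = H_k(x)H_k(x+k)$, three applications of Proposition \ref{prop:shifting} using only the shift amounts $k$ and $(k\rho+1)\beta$, and pointwise multiplication, with the same cost accounting. The only (immaterial) difference is the order in which the two shifts are composed for the upper half of the range, and you make explicit the translation-invariance of the hypotheses of Proposition \ref{prop:shifting} (via $G(x)=F(x+c)$), which the paper uses implicitly.
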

\begin{proof}
We start by applying Proposition \ref{prop:shifting} with $\alpha = k$ and $d = k\rho$ to the known values of $H_k(x)$ to obtain
$$ H_k(k), H_k(\beta + k),\ldots, H_k(k \rho \beta + k)$$
in $O(\M(k \rho))$ bit operations. Since
$$ H_{2k}(x) = H_k(x) H_k(x+ k) $$
we may multiply these to obtain
 $$ H_{2k}(0), H_{2k}(\beta), \ldots, H_{2k}(k\rho\beta) $$
in $(k \rho + 1) m = O(\M(k \rho))$ bit operations.

We now apply Proposition \ref{prop:shifting} to the original values again, this time with $\alpha = (k \rho + 1)\beta$, to obtain
$$ H_k((k \rho+1)\beta), H_k((k \rho+2)\beta), \ldots, H_k((2k\rho+1)\beta).$$
A final application of Proposition \ref{prop:shifting} with $\alpha = k$ yields
$$ H_k((k \rho+1)\beta + k), H_k((k \rho+2)\beta + k), \ldots, H_k((2k\rho+1)\beta + k).$$
As above we can multiply these to obtain
$$ H_{2k}((k \rho+1)\beta), H_{2k}((k \rho+2)\beta), \ldots, H_{2k}((2k\rho+1)\beta).$$
Discarding the last value, we have the desired output. The total complexity is $O(\M(k \rho))$ bit operations.
\end{proof}

In Proposition \ref{prop:fasteval}, we will apply the previous result recursively. The following definition consolidates the required invertibility conditions.
\begin{defn}
Let $r \geq 1$. We say that $\HH(2^r,\beta,\rho)$ holds if $\hh(2^i,\beta, 2^i \rho)$ and $\hh(( 2^i \rho +1)\beta, \beta , 2^i \rho)$ hold for each $0 \leq i < r$. We write
 \[ \DD(2^r, \beta, \rho ) = \prod_{i=0} ^{r-1} \dd(2^i ,\beta , 2^i \rho ) \dd((
2^i \rho +1)\beta, \beta , 2^i \rho ). \]
\end{defn}
As before, $\HH(2^r,\beta,\rho)$ holds if and only if $\DD(2^r,\beta,\rho )$ is invertible. 

\begin{prop}\label{prop:fasteval}
Assume that $\HH(2^r,\beta,\rho )$ holds and that the inverse of $\DD(2^r,\beta,\rho )$ is known. Let $k=2^r$. We may compute $$H_k(0), H_k(\beta), \ldots, H_k(k\rho\beta) $$ in $O(\M(k \rho )+ \rho^2 m)$ bit operations.
\end{prop}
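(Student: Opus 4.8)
The plan is to compute the required values by repeated doubling, using Proposition \ref{prop:extend} to pass from $H_{2^i}$ to $H_{2^{i+1}}$ for $i = 0, 1, \ldots, r-1$. Since $H_1 = H$, the recursion is seeded by evaluating $H$ itself on the arithmetic progression $0, \beta, \ldots, \rho\beta$; after $r$ doubling steps we arrive at $H_{2^r} = H_k$ evaluated on $0, \beta, \ldots, k\rho\beta$, which is exactly the desired output.

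For the base case I would note that $H$ has degree $\rho$, so computing the $\rho+1$ values $H(0), H(\beta), \ldots, H(\rho\beta)$ by Horner's rule costs $O(\rho m)$ per point and hence $O(\rho^2 m)$ in total; this accounts for the $\rho^2 m$ term in the stated bound. For the inductive step I would invoke Proposition \ref{prop:extend} with $k = 2^i$: its hypotheses are precisely $\hh(2^i, \beta, 2^i\rho)$ and $\hh((2^i\rho + 1)\beta, \beta, 2^i\rho)$ together with knowledge of the inverses of the associated $\dd$ values, and these are exactly the conditions packaged into $\HH(2^r, \beta, \rho)$ for $0 \leq i < r$. Each such step converts the $2^i\rho + 1$ known values of $H_{2^i}$ into the $2^{i+1}\rho + 1$ values of $H_{2^{i+1}}$ in $O(\M(2^i\rho))$ bit operations.

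It remains to supply the individual inverses demanded at each stage and to total the cost. We are given the inverse of the single product $\DD(2^r, \beta, \rho) = \prod_{i} \dd(2^i, \beta, 2^i\rho)\,\dd((2^i\rho+1)\beta, \beta, 2^i\rho)$, a product of $2r$ factors. Computing each of these $2r$ factors directly from its defining product of ring elements costs $O(\sum_i 2^i\rho\, m) = O(k\rho\, m)$ bit operations, which is $O(\M(k\rho))$ since $\M(k\rho) \geq k\rho m$; feeding these factors and the known product inverse into the first Lemma of this section then yields all $2r$ individual inverses in a further $O(r m)$ operations. Finally, the recursion cost $\sum_{i=0}^{r-1}\M(2^i\rho)$ is controlled using the superlinearity hypothesis $\M(dd') \geq d\,\M(d')$: taking $d = 2^{r-1-i}$ and $d' = 2^i\rho$ gives $\M(2^i\rho) \leq 2^{-(r-1-i)}\M(2^{r-1}\rho)$, so the sum is dominated by a geometric series totalling $O(\M(2^{r-1}\rho)) = O(\M(k\rho))$.

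The main obstacle, and the point requiring the most care, is this final complexity accounting: a naive bound would pick up an extra factor of $r = \log_2 k$ from the number of doubling steps, and it is only the superlinear growth of $\M$ that collapses the geometric sum back to $O(\M(k\rho))$. Everything else—the base-case evaluation, the structural bookkeeping of which invertibility conditions each application of Proposition \ref{prop:extend} requires, and the extraction of the individual inverses from the single given inverse of $\DD(2^r,\beta,\rho)$—is routine once the recursion is set up correctly.
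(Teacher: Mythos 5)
Your proposal is correct and follows essentially the same route as the paper's proof: naive evaluation of $H$ at $0,\beta,\ldots,\rho\beta$ for the base case, repeated doubling via Proposition \ref{prop:extend}, recovery of the individual $\dd$-inverses from the single inverse of $\DD(2^r,\beta,\rho)$ by recomputing the factors and invoking the batch-inversion lemma, and a geometric-sum bound on $\sum_i \M(2^i\rho)$. The only difference is expository—you spell out explicitly the superlinearity argument $\M(2^i\rho)\leq 2^{-(r-1-i)}\M(2^{r-1}\rho)$ that the paper leaves implicit in the final displayed estimate.
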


\begin{proof}
We first compute $H(x)$ at $x = 0, \beta, \ldots, \rho \beta$. This can be done in $O(\rho^2 m)$ bit operations. (This can be improved to $O( \M(\rho) \log \rho)$ using standard multipoint evaluation techniques, but we will not use this.)

We then apply Proposition \ref{prop:extend} successively for $k = 1, 2, 4, \ldots, 2^{r-1}$. The cost at the $i$th step is $O(\M(2^i \rho))$ bit operations. At the $i$th step, we need to supply the inverses of
$$ \dd(2^i, \beta, 2^i \rho ) \quad \text{and} \quad \dd((2^i \rho +1)\beta, \beta,2^i\rho ). $$
Computing each product can be done in $O(2^i \rho m)$ bit operations, and with the inverse of $\DD(2^r,\beta,\rho)$ we can compute the inverses sought; all
this can be done in $O(k \rho m)$ bit operations. The total complexity is
 \[ O(\rho^2 m + k \rho m + \M(\rho) + \M(2\rho) + \cdots + \M(2^{r-1} \rho)) = O(\rho^2 m + \M (k \rho)) \]
bit operations.
\end{proof}

\section{Application to integer factorization}
\label{application}

We now specialize to $R = \Z/N\Z$. Elements of $R$ are represented in the standard way using bitstrings of length $O(\log N)$. We have $m = O(\Mi(\log N))$, and $\M(d) = O(\Mi(d \log (dN)))$ using Kronecker substitution \cite{Sch-numerical}. If $d = O(N)$, which for us will always be the case, this simplifies to $\M(d) = O(\Mi(d \log N))$. The inverse of an element of $R$, if it exists, may be computed in time $O(\Mi(\log N) \log \log N)$ using a fast extended GCD algorithm \cite{gcd}.

Let $B>2$ be a parameter; an optimal value for $B$ will be chosen later on. Let
 \[ Q = \prod_{\substack{p < B \\ \text{$p$ prime}}} p. \]
We will apply the results of the previous section to the polynomial
 \[ H(x) = \prod_{\substack{j=1\\(j,Q)=1}} ^{Q} (Qx+j), \]
which has degree $\rho = \phi(Q) = \prod_{p < B} (p-1)$.

We start with an auxiliary result.
\begin{lem}
\label{lem:invertible}
Let $f_0,\dots,f_{k-1} \in \Z/N\Z$. Then we can decide if all $f_i$ are invertible modulo $N$ and, if not, find a noninvertible $f_i$ in
$$O(k \, \Mi(\log N)+\log k \; \Mi (\log N) \log \log N)$$
bit operations.
\end{lem}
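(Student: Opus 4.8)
The plan is to compute the product $P = f_0 f_1 \cdots f_{k-1}$, invert it (if possible), and then use the standard batch-inversion lemma (the first unnumbered Lemma in the excerpt) to recover all the individual inverses at once; the inputs that fail to be invertible are exactly the obstructions we are hunting for. The subtlety is that the batch-inversion lemma assumes all $r_i$ are invertible and requires $(r_1 \cdots r_d)^{-1}$ as input, so I must first \emph{detect} whether any $f_i$ is noninvertible, and if so \emph{locate} one such $f_i$, before that lemma can be applied. Let me sketch how to do both within the stated budget.

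\medskip

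First I would build a balanced product tree on the $f_i$, whose root is $P = f_0 \cdots f_{k-1}$. Forming the tree costs $O(k\,\Mi(\log N))$ bit operations, since there are $O(k)$ internal nodes and each is a single multiplication in $\Z/N\Z$ of cost $m = O(\Mi(\log N))$; here I use that the operands are reduced mod $N$, so every node holds an $O(\log N)$-bit value and no size blow-up occurs. Next I compute $\gcd(P, N)$ using the fast extended GCD algorithm, at cost $O(\Mi(\log N)\log\log N)$. If this gcd is $1$, then $P$ is invertible, hence so is every $f_i$, and we report that all $f_i$ are invertible; along the way the extended GCD also yields $P^{-1}$, which is exactly the auxiliary input required by the batch-inversion lemma should the caller want the individual inverses. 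If instead $\gcd(P,N) > 1$, then at least one $f_i$ is noninvertible, and we must locate one.

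\medskip

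To locate a noninvertible $f_i$, I would descend the product tree from the root, at each internal node examining its two children. By the multiplicativity of the gcd condition, a node's value is invertible modulo $N$ if and only if both children's values are; equivalently, a node is noninvertible precisely when at least one child is. So at each step I compute one gcd with $N$ (for, say, the left child) and follow whichever child is noninvertible, guaranteeing progress toward a leaf. Since the tree has depth $O(\log k)$, this descent performs $O(\log k)$ gcd computations, for a total of $O(\log k \cdot \Mi(\log N)\log\log N)$ bit operations. The leaf reached is an index $i$ with $f_i$ noninvertible, which we return. Adding the tree-construction cost $O(k\,\Mi(\log N))$, the root gcd, and the descent cost gives the claimed bound
\[
O\bigl(k\,\Mi(\log N) + \log k \;\Mi(\log N)\log\log N\bigr).
\]

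\medskip

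The step I expect to be most delicate is the descent, specifically the bookkeeping that a noninvertible value at a node really does force a noninvertible child: this rests on $\gcd(ab, N) > 1 \iff \gcd(a,N)>1 \text{ or } \gcd(b,N)>1$, which holds because a prime dividing both $ab$ and $N$ must divide $a$ or $b$. I would also need to confirm that reducing products modulo $N$ at every tree node keeps all gcd inputs of size $O(\log N)$, so that each gcd genuinely costs $O(\Mi(\log N)\log\log N)$ rather than more; this is immediate since we work throughout in $\Z/N\Z$. The remaining estimates are routine summations over the tree.
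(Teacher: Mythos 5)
Your proposal is correct and follows essentially the same route as the paper, which simply invokes \cite[Lemma 12]{BGS}: form the subproduct tree of the $f_i$ and apply fast GCDs to it (one at the root to detect, then one per level on a descent to locate a noninvertible leaf). Your cost accounting and the observation that $\gcd(ab \bmod N, N) > 1$ iff $\gcd(a,N) > 1$ or $\gcd(b,N) > 1$ match the standard argument behind that cited lemma.
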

\begin{proof}
See \cite[Lemma 12]{BGS}. The idea is to apply the GCD to the subproduct tree formed by the $f_i$.
\end{proof}

The core of our algorithm comes next.

\begin{prop}\label{prop:main}
Let $r \geq 0$ and $b = 4^r \rho Q$. Assume that $b < N$ and that $(N, Q) = 1$. We can find a prime divisor $\ell$ of $N$ such that $\ell \leq b$, or prove that no such divisor exists, in
\begin{equation*}
O\left(\Mi\left( 2^r \rho \log N \right) + (Q^2 + \log (2^r \rho)) \Mi (\log N) \log \log N \right)
\end{equation*}
bit operations. 
\end{prop}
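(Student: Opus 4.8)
The plan is to reduce the problem to a single fast evaluation of $H_k$ on an arithmetic progression, and then to extract a prime factor by gcd computations organized along subproduct trees. I would begin from the arithmetic meaning of the values produced by Proposition \ref{prop:fasteval}. Since $(Qy+j,Q)=(j,Q)$, the value $H(y)$ is the product in $\Z/N\Z$ of all integers in $(Qy,Q(y+1)]$ coprime to $Q$, and hence $H_k(x)=H(x)\cdots H(x+k-1)$ is the product of all such integers in $(Qx,Q(x+k)]$. I set $k=2^r$ and $\beta=k$. Then $H_k(i\beta)$ is exactly the product of the integers coprime to $Q$ in the block $(ikQ,(i+1)kQ]$, and the first $k\rho$ blocks $i=0,\dots,k\rho-1$ tile $(0,k^2\rho Q]=(0,b]$ because $k^2=4^r$. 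Since $(N,Q)=1$, every prime divisor of $N$ is $\ge B$ and so coprime to $Q$; thus a prime divisor $\ell\le b$ of $N$ exists if and only if some integer coprime to $Q$ in $(0,b]$ is noninvertible modulo $N$, equivalently if and only if some block value $H_k(i\beta)$ with $0\le i<k\rho$ is noninvertible.

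Next I would arrange the invertibility hypotheses needed to invoke Proposition \ref{prop:fasteval}. I form $\DD(2^r,\beta,\rho)$ as an explicit product of $O(2^r\rho)$ integers of $\Z/N\Z$ and attempt to invert it. Every factor occurring in $\DD$ is an integer of absolute value at most $4^r\rho<b<N$ (a point worth checking term by term); consequently, if $\DD$ is not invertible, then one of these factors shares a prime divisor $\ell\le b$ with $N$, and I branch to the recovery step below. Otherwise $\HH(2^r,\beta,\rho)$ holds, the inverse of $\DD$ is known, and Proposition \ref{prop:fasteval} returns $H_k(0),H_k(\beta),\dots,H_k(k\rho\beta)$ in $O(\M(k\rho)+\rho^2 m)$ bit operations; I retain the first $k\rho$ of these.

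For detection and recovery I would feed the block values into Lemma \ref{lem:invertible}. If all are invertible, then no integer coprime to $Q$ in $(0,b]$ meets $N$, so $N$ has no prime factor $\le b$ and I report that none exists. Otherwise, descending through the subproduct tree underlying Lemma \ref{lem:invertible} locates the \emph{least} index $i^\ast$ with $H_k(i^\ast\beta)$ noninvertible. I then list the $k\rho$ integers coprime to $Q$ in the block $(i^\ast kQ,(i^\ast+1)kQ]$, each an explicit integer $\le b<N$, and apply Lemma \ref{lem:invertible} once more to find the least noninvertible one, say $a$. Because all earlier blocks are invertible, $a$ is the least integer coprime to $Q$ in $(0,b]$ with $\gcd(a,N)>1$, and I claim $a$ is prime: any prime $\ell$ dividing both $a$ and $N$ satisfies $\ell\ge B$ and is coprime to $Q$, while if $a$ were composite we would have $1<\ell<a$, exhibiting a strictly smaller integer $\ell$ coprime to $Q$ in $(0,b]$ with $\gcd(\ell,N)=\ell>1$ and contradicting minimality. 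Hence $a$ is a prime divisor of $N$ with $a\le b$, and I output $\ell=a$.

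Finally I would tally the cost. The dominant term is $\M(k\rho)=O(\Mi(2^r\rho\log N))$. The base-case term $\rho^2 m=\rho^2\,\Mi(\log N)$ is absorbed by $Q^2\,\Mi(\log N)\log\log N$ since $\rho=\phi(Q)<Q$. Forming $\DD$ and the per-block products costs $O(2^r\rho\,m)$, which lies within $\Mi(2^r\rho\log N)$ by superlinearity of $\Mi$; each inversion and each gcd costs $O(\Mi(\log N)\log\log N)$, and the two passes through Lemma \ref{lem:invertible} contribute $O(2^r\rho\,\Mi(\log N)+\log(2^r\rho)\,\Mi(\log N)\log\log N)$, which matches the remaining terms of the stated bound. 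I expect the main obstacle to be the bookkeeping around the invertibility hypotheses: confirming that every quantity ever inverted (the factors of $\DD$, the block values, and the block integers) is a nonzero integer smaller than $N$ in absolute value, so that any failure of invertibility yields a genuine nontrivial factor and ultimately a prime $\le b$, rather than a useless gcd equal to $N$.
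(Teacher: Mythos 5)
Your overall architecture is the same as the paper's: take $k=\beta=2^r$, interpret $H_k(ik)$ as the product of the integers coprime to $Q$ in consecutive blocks tiling $(0,b]$, check the hypotheses of Proposition \ref{prop:fasteval} on $O(2^r\rho)$ explicit integers of absolute value at most $b$, run the fast evaluation, and detect a bad block with Lemma \ref{lem:invertible}. Your primality-by-minimality argument in the main branch (the least integer coprime to $Q$ in $(0,b]$ with nontrivial gcd must itself be prime, since any prime factor of its gcd with $N$ is coprime to $Q$ and at most as large, hence equal to it by minimality) is correct, and is a clean alternative to the paper's recovery mechanism. However, there is a genuine gap in the branch where $\DD(2^r,\beta,\rho)$ turns out to be noninvertible. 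You say you ``branch to the recovery step below,'' but that recovery step operates on the block values $H_k(0),\dots,H_k((k\rho-1)k)$, and these cannot be computed in this branch: Proposition \ref{prop:fasteval} requires $\HH(2^r,\beta,\rho)$ to hold, which is exactly what has failed. So as written your algorithm can certify that a prime divisor $\ell\le b$ \emph{exists} (some factor of $\DD$ shares a factor with $N$), but has no procedure to \emph{find} one, which is what the proposition demands. Nor does your minimality argument transfer to the factors of $\DD$: the integers $2^i+j2^r$, $|j|\le 2^i\rho$, are not an initial segment of the integers coprime to $Q$, so the least noninvertible one among them need not be prime.

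The paper closes precisely this hole with a uniform recovery step that your proposal lacks: whenever \emph{any} noninvertible integer $x$ with $|x|\le b$ is encountered --- whether a factor of $\DD$ or a block integer --- one trial-divides by the $O(2^r\rho)$ integers $2\le \ell\le\sqrt b$ with $(\ell,Q)=1$; since every prime factor of $\gcd(x,N)$ is coprime to $Q$, this either exhibits a prime factor of $N$ at most $\sqrt b$ or certifies that the remaining cofactor of $\gcd(x,N)$ is prime, all within the allowed $O(2^r\rho\,\Mi(\log N))=O(\Mi(2^r\rho\log N))$ budget. (Alternatively, your minimality idea can be patched for this branch: the factors of $\DD$ include all consecutive integers $2,\dots,2^r\rho+1$, so if one of those is noninvertible the least such is prime; and if all of those are invertible, then every prime factor of $N$ exceeds $2^r\rho+1>\sqrt{|x|}$ for any noninvertible factor $x=2^i+j2^r$, forcing $\gcd(x,N)$ itself to be prime. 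But some such argument must be supplied.) Two minor points: the factors of $\DD$ are bounded in absolute value by $4^r\rho+2^r$, not $4^r\rho$ (still at most $b$, so harmless); and you invoke a slightly stronger form of Lemma \ref{lem:invertible} than is stated --- finding the \emph{least} noninvertible entry --- which is justified by a leftmost descent of the subproduct tree but should be said explicitly.
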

\begin{proof}
We first list the integers $1 \leq j < Q$ such that $(j, Q) = 1$, by computing $(j, Q)$ for each candidate $j$. Noting that $Q < N$, this uses $O(Q \, \Mi(\log N) \log \log N)$ bit operations. Using this list, we compute the coefficients of $H(x)$; the naive algorithm for this uses $O(\rho^2 \Mi(\log N))$ bit operations.

In the algorithm described below, we will test various elements of $\Z/N\Z$ for invertibility. If at any stage we encounter a noninvertible $x$ with $x \leq b$, then we are done. Indeed, to find a suitable prime divisor of $N$ it suffices to perform trial division of $x$ by the integers $2 \leq \ell \leq \sqrt b$ with $(\ell, Q) = 1$. The number of such $\ell$ is at most $\rho\lceil \sqrt b /Q \rceil \leq \rho(\sqrt b/Q + 1) = 2^r \rho \sqrt{\rho/Q} + \rho = O(2^r \rho)$, so the cost of these trial divisions is $O(2^r \rho \Mi(\log N)) = O(\Mi(2^r \rho \log N))$.

We would like to apply Proposition \ref{prop:fasteval} to $H(x)$ with $k = \beta = 2^r$. We must first verify that $\HH(2^r, 2^r, \rho)$ is satisfied. This is equivalent to invertibility of
 \[  2, 3, \ldots, (2^r \rho + 1) \]
and
 \[ (2^i - 2^i \rho 2^r),(2^i -(2^i \rho-1) 2^r),\ldots,(2^i + 2^i \rho 2^r)  \]
for each $0 \leq i \leq r-1$. These integers are all bounded (in absolute value) by $b$, and there are $O(2^r \rho)$ of them. By Lemma \ref{lem:invertible} we may prove they are invertible, or find a noninvertible one, in
 \[ O(2^r \rho \Mi(\log N) + \log (2^r \rho) \Mi(\log N) \log \log N) \]
bit operations. Computing $\DD(2^r, 2^r, \rho)$ requires $O(2^r \rho \Mi(\log N))$ bit operations, and finding its inverse has negligible cost. Proposition \ref{prop:fasteval} then computes
 \[ H_k(0), H_k(k), \ldots, H_k((k \rho - 1)k) \]
using
 \[ O(\Mi(2^r \rho \log N) + \rho^2 \Mi(\log N)) \]
bit operations.

By construction we have
\begin{equation}
\label{eq:product}
\prod_{i=0} ^{k \rho -1} H_k(i k) = \prod_{i=0} ^{k \rho -1} \prod_{ \substack{j=1 \\ (j,Q)=1}} ^{kQ} (i k Q +j)  = \prod_{\substack{j=1\\ (j,Q)=1 }} ^{ b} j.
\end{equation}
If any of the $H_k(ik)$ are noninvertible, by Lemma \ref{lem:invertible} we may find one in
 \[ O(2^r \rho \Mi(\log N) + \log(2^r \rho) \Mi(\log N) \log \log N) \]
bit operations. In this case we may find a noninvertible integer bounded by $b$ within the same time bound, since $H_k(ik)$ is itself a product of $O(k \rho)$ integers bounded by $b$. Otherwise we have proved that \eqref{eq:product} is invertible, and we are finished.
\end{proof}

\begin{proof}[Proof of Theorem \ref{thm:main}]
We will take $B = \frac{1}{11} \log N$. By the prime number theorem we have $\sum_{p < x} \log p = x + o(x)$ \cite[Theorem 6.9]{MNT1}, so
 \[ Q = \prod_{p < B} p = O(N^{(1+o(1))/11}) = O(N^{1/10}). \]

We may remove any factors of $N$ bounded by $B$ with negligble cost, so we may assume that $(N, Q) = 1$.

We now apply Proposition \ref{prop:main}, starting with $b = \rho Q$ ($r = 0$). If we find a prime divisor $\ell \leq b$, we remove it from $N$ and repeat. Otherwise we quadruple $b$ (increment $r$) and repeat. We continue until we reach $b \geq \sqrt{N}$; the last iteration has $r = r_0$ where
 \[ r_0 = \lceil \log_4(\sqrt N/\rho Q) \rceil. \]

To analyze the overall complexity, observe that when we apply the algorithm for a given $b$, all prime divisors $\ell \leq b/4$ have already been found and removed. Since their product is bounded by $N$, the number of runs of the algorithm for a given $b$ is bounded by $O(\log N / \log b)$. Therefore the complexity is
 \[ O\left(\sum_{r=0}^{r_0} \frac{\log N}{\log(4^r \rho Q)} \left(\Mi\left( 2^r \rho \log N \right) + (Q^2 + \log (2^r \rho)) \Mi (\log N) \log \log N \right) \right). \]

Since $Q^2 = O(N^{1/5})$ and $r_0 = O(\log N)$, the second term is bounded by $O(N^{1/5} \log^{3+\eps} N)$. To estimate the first term, we split the sum into $r \leq r_0/2$ and $r > r_0/2$. For the terms with $r \leq r_0/2$, we have $2^r = O(N^{1/8}/ (\rho Q)^{1/4}) = O(N^{1/8} / \rho^{1/2})$ so $2^r \rho = O(N^{1/8} \rho^{1/2}) = O(N^{1/8+1/20}) = O(N^{1/5})$; thus the sum is bounded by $(\log N)^2 (N^{1/5} \log N)^{1+\eps}$. So far these contributions are negligible. The main contribution comes from the terms $r > r_0/2$. For these $r$ we have $4^r \rho Q \geq N^{1/4}$, so $\log N / \log(4^r \rho Q) = O(1)$, and the sum is bounded by
\begin{align*}
 \sum_{r_0/2 < r \leq r_0} O(\Mi(2^r \rho \log N)) & = O(\Mi(2^{r_0} \rho \log N)) \\ & = O(\Mi(N^{1/4} (\rho/Q)^{1/2} \log N)).
\end{align*}
But by Mertens' theorem \cite[Theorem 2.7]{MNT1},
 \[ \frac{\rho}{Q} = \prod_{p < B} \frac{p-1}p = O\left(\frac{1}{\log B}\right) = O\left(\frac{1}{\log \log N}\right), \]
and the desired result follows.
\end{proof}

\bibliographystyle{alpha}
\bibliography{biblio}

\end{document}